\DeclareMathOperator{\tr}{tr}
\DeclareMathOperator{\R}{\mathbb{R}}
\DeclareMathOperator{\T}{\mathbb{T}}
\DeclareMathOperator{\Hilbert}{\mathcal{H}}
\DeclareMathOperator{\ltrpv}{\lvert\mspace{-1.2mu}\lvert\mspace{-1.2mu}\lvert}
\newtheorem{theorem}{Theorem}[section]
\newtheorem{corollary}[theorem]{Corollary}
\theoremstyle{definition}
\theoremstyle{remark}
\newtheorem{remark}[theorem]{Remark}
\numberwithin{equation}{section}
\begin{document}


\title[
Generic factorization
]
{Noncommutative Khintchine\\
and Paley inequalities\\
via generic factorization}
\author{John J.F. Fournier}
\address{Department of Mathematics\\
University of British Columbia\\
1984 Mathematics Road\\
Vancouver BC\\
Canada V6T 1Z2}
\email{fournier@math.ubc.ca}
\subjclass[2010]{Primary {46L52};
Secondary {42A55, 46N30}}
\thanks{Announced at the 6th Conference on Function Spaces in May 2010.}
\thanks{Research partially supported by NSERC grant 4822.}


\begin{abstract}
We reprove an inequality for Rademacher series with coefficients in the Schatten class~$S_1$. Our method 
yields
the same estimate for coefficients after
suitable
gaps in~$S_1$-valued trigonometric series; this
was known for scalar-valued functions.
A very similar method
gives a new proof of
the extension 
to~$S_1$-valued~$H^1$ functions
of
Paley's theorem 
about
lacunary coefficients.
\end{abstract}

\maketitle

\markleft{
John J.F. Fournier
}

\section{Introduction}\label{sec:intro}

Given
a function~$f$ in~$L^1((-\pi, \pi])$,
form its
Fourier coefficients
\begin{equation}
\label{eq:Fourier}
\hat f(n) = \frac{1}{2\pi}\int_{-\pi}^\pi
f(t)e^{-int}\,dt.
\end{equation}
Identify the interval~$(-\pi, \pi]$ with the unit circle group~$\T$ in the usual way.
Extend
formula~\eqref{eq:Fourier}
to Bochner integrable functions~$f$ mapping~$\T$ into
the Schatten classes~$S_p$ with~$1 \le p < \infty$.
Let
\[
\|f\|_{L^p(\T;S_p)} = \left\{\frac{1}{2\pi}\int_{-\pi}^\pi
\left(\|f(t)\|_{S_p}\right)^p\,dt\right\}^{1/p}.
\]
In Section~\ref{sec:Properties}, we 
discuss the properties of~$S_p$ and~$L^p(
\T;
S_p)$ that we use.

Let~$(k_j)_{j=0}^\infty$
be a sequence of nonnegative integers
for which
\begin{equation}
k_{j+1} > 2k_j
\end{equation}
for all $j$. Denote the range of such a sequence by~$K$,
and call~$K$ \textit{strongly lacunary.}
In Theorem~\ref{th:Paley} below, we present two cases where, if~$f \in L^1(\T; S_1)$ and if~$\hat f$ vanishes on a suitable subset of the complement of such a set~$K$, then the restriction of~$\hat f$ to~$K$ has special properties.  This is known for scalar-valued functions, as is one~\cite{LP} of the cases for functions with values in~$S_1$, but our 
proof
is 
new in that context.

We also consider functions in~$L^1([0, 1); S_1)$ whose Walsh coefficients, in the Paley ordering, vanish except at the powers of~$2$; see Section~\ref{sec:two-step} for more details. The Walsh series for such functions reduce to Rademacher series
$
\sum_{j=0}^\infty d_j r_j(\cdot)
$
with~$S_1$-valued coefficients.
We give a new proof that these coefficients also have special properties.

Those involve the following 
norm
on some sequences,~$(c_j)$ say, of compact operators on a Hilbert space,~$\Hilbert$ say.
Let
\begin{equation}
\label{eq:ColumnNorm}
\|(c_j)\|_{\mathcal{C}_E}
=
\left
\|\sqrt{\sum_j c_j^*c_j}
\right
\|_{S_1}
= \tr\sqrt{\sum_j c_j^*c_j}.
\end{equation}
This is
the norm in~$S_1(\ell^2(\Hilbert))$ of any operator-valued
square 
matrix
in which one column
is the sequence~$(c_j)$ and the other columns are trivial. 
Note that
\begin{equation}
\label{eq:RowNorm}
\|(c_j^*)\|_{\mathcal{C}_E}
=
\left
\|\sqrt{\sum_j c_jc_j^*}
\right
\|_{S_1}
= \tr\sqrt{\sum_j c_jc_j^*}.
\end{equation}
Finally, follow~\cite{LP} and let
\begin{equation}
\label{eq:SplittingNorm}
\ltrpv
(c_j)
\rvert\mspace{-1.2mu}\rvert\mspace{-1.2mu}\rvert = \inf\left\{\|(a_j)\|_{\mathcal{C}_E}
+ \|(b_j^*)\|_{\mathcal{C}_E}:
(c_j) = (a_j) + (b_j)\right\}.
\end{equation}
In~\cite{HaaMus} this functional is denoted instead by~$\ltrpv
\cdot
\rvert\mspace{-1.2mu}\rvert\mspace{-1.2mu}\rvert^*$,
and a dual norm is denoted by~$\ltrpv
\cdot
\rvert\mspace{-1.2mu}\rvert\mspace{-1.2mu}\rvert$.

We prove
the assertions below,
in which~$C$ is an absolute constant,
in Section~\ref{sec:two-step}.
%

\begin{theorem}
\label{th:Khintchine}
If a Rademacher series with coefficients~$(d_j)$ represents
a function~$f$ in~$L^1([0, 1); S_1)$, then
\begin{equation}
\label{eq:Khintchine}
\ltrpv(d_j)
\rvert\mspace{-1.2mu}\rvert\mspace{-1.2mu}\rvert
\le C\|f\|_{L^1([0, 1);S_1)}.
\end{equation}
\end{theorem}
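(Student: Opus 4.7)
The plan is to mirror Paley's classical factorization argument for lacunary Fourier coefficients of $H^1$ functions, adapted to the $S_1$-valued $L^1$ setting. The proof has two main steps: a generic factorization of $f$ as a product of two $L^2([0,1); S_2)$-valued functions, and a bit-parity splitting of the resulting Walsh series.

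Step 1 (Generic factorization). For a.e.~$t$, the polar decomposition and operator square root give $f(t) = g(t) h(t)^*$ with $g(t), h(t) \in S_2$ and $\|g(t)\|_{S_2} = \|h(t)\|_{S_2} = \|f(t)\|_{S_1}^{1/2}$. A measurable selection (using the properties of $S_p$ from Section~\ref{sec:Properties}) then yields $g, h \in L^2([0,1); S_2)$ with $\|g\|_{L^2(S_2)} \cdot \|h\|_{L^2(S_2)} = \|f\|_{L^1([0,1); S_1)}$.

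Step 2 (Bit-parity splitting). Identify $[0,1)$ with $\{0,1\}^{\mathbb{N}}$ via binary expansion, so that $r_j$ depends only on the $j$-th coordinate $t_j$. For each $j$ write
\[
g(t) = g_j^0(t) + r_j(t)\, g_j^1(t), \qquad h(t) = h_j^0(t) + r_j(t)\, h_j^1(t),
\]
where each of $g_j^0, g_j^1, h_j^0, h_j^1$ is independent of $t_j$. Expanding $r_j(t) g(t) h(t)^*$ and integrating, the two terms still carrying a factor of $r_j$ vanish, leaving
\[
d_j = a_j + b_j, \qquad a_j = \int_0^1 g_j^0(t)\, h_j^1(t)^* \, dt, \qquad b_j = \int_0^1 g_j^1(t)\, h_j^0(t)^* \, dt.
\]
Equivalently, for Walsh expansions $g = \sum_n \hat g_n w_n$ and $h = \sum_m \hat h_m w_m$, the identity $w_n w_m = w_{n \oplus m}$ gives $a_j = \sum_{n: n_j = 0} \hat g_n \hat h_{n + 2^j}^*$, and $b_j$ is the symmetric sum.

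Step 3 (Column and row estimates). One must show
\[
\|(a_j)\|_{\mathcal{C}_E} \le \|g\|_{L^2(S_2)} \|h\|_{L^2(S_2)}, \qquad \|(b_j^*)\|_{\mathcal{C}_E} \le \|g\|_{L^2(S_2)} \|h\|_{L^2(S_2)};
\]
adding these and applying Step 1 yields the theorem with $C = 2$.

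The main obstacle is Step 3. A pointwise Cauchy--Schwarz gives $\|a_j\|_{S_1} \le \|g_j^0\|_{L^2(S_2)} \|h_j^1\|_{L^2(S_2)}$, which is adequate for a single $j$ but not for the column norm. A naive block factorization of the array $(a_j)$ lets each Walsh coefficient $\hat g_n$ of $g$ reappear in $a_j$ for every $j$ with $n_j = 0$---potentially infinitely many $j$'s---producing a divergent sum. The heart of the proof must therefore be an operator-valued Cauchy--Schwarz that exploits the $r_j$-orthogonality of the Rademacher system to avoid this double counting. This is the noncommutative analogue of the Hilbert-space identity used by Paley in the scalar case, and is where the ``generic factorization'' method of the present paper is expected to depart from the dual-interpolation approach of Lust--Piquard.
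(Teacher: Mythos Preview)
Your proposal is incomplete, and you have correctly located the hole: your bit-parity splitting in Step~2 cannot support the column/row estimates in Step~3. The reason is structural. For each $j$ you split $g$ according to the single bit $t_j$, so the pieces $g_j^1$ (and likewise $h_j^1$) are \emph{not} mutually orthogonal as $j$ varies; every Walsh coefficient of $g$ with several $1$-bits contributes to many $j$'s, and no Cauchy--Schwarz trick will cure that. More tellingly, your Steps~1--2 never use the hypothesis that $f$ is a \emph{pure Rademacher series}, i.e.\ that $\hat f(w_n)=0$ whenever $n$ is not a power of $2$. That hypothesis is essential: the map $f\mapsto(\hat f(2^j))_j$ is unbounded from $L^1$ to $\ell^2$ already in the scalar case, so no splitting that ignores the gap condition can work.

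The paper's repair is to replace your single-bit projections by a \emph{nested} filtration. Let $M_j$ be the closed right $B(\Hilbert)$-module generated by the functions $w_n h$ with $0<n<2^{j+1}$, and let $Q_j$ and $P_j$ be the orthogonal projections onto $A_jM_j$ and $M_j$ respectively, where $A_j$ is multiplication by $r_j$. One then splits
\[
d_j=\langle g,A_jh\rangle_p=\langle Q_jg,A_jh\rangle_p+\langle(Q_{j+1}-Q_j)g,A_jh\rangle_p=a_j+b_j.
\]
Because the $M_j$ nest, the increments $(Q_{j+1}-Q_j)g$ are orthogonal and $\sum_j\|(Q_{j+1}-Q_j)g\|^2\le\|g\|^2$; this is exactly what defeats the double-counting you flagged. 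The gap hypothesis enters when handling $a_j$: since $A_{j+1}M_j$ is spanned by $w_nhb$ with $2^{j+1}<n<2^{j+2}$, the vanishing of $\hat f(w_n)$ there gives $g\perp_p A_{j+1}M_j$, which lets one rewrite $a_j=\langle g,A_j(P_j-P_{j-1})h\rangle_p$ and run the same orthogonal-increment estimate on the $h$ side. The operator-valued Cauchy--Schwarz you anticipated is then just the pointwise H\"older bound $\|h(t)^*G(t)h(t)\|_{S_{1/2}}\le\|h(t)\|_{S_2}^2\|G(t)\|_{S_1}$ followed by scalar Cauchy--Schwarz in $t$.
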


\begin{theorem}
\label{th:Paley}
Let~$K$ be a strongly lacunary set,
and let~$f \in L^1(\T; S_1)$.
Then the estimate
\begin{equation}
\label{eq:Paley}
\ltrpv(\hat f(k_j))
\rvert\mspace{-1.2mu}\rvert\mspace{-1.2mu}\rvert
\le C\|f\|_{L^1(\T, S_1)}
\end{equation}
holds in 
each of
the following cases:
\begin{enumerate}
\item{}
\label{it:Paley}
$\hat f$ vanishes
on the set of negative integers.
\item{}
\label{it:Complementary}
$\hat f$ vanishes
at all positive integers in the complement of~$K$.
\end{enumerate}
\end{theorem}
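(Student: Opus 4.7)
The plan is to adapt the classical scalar proof of Paley's theorem to the operator-valued setting, factoring $f$ into two $L^{2}(\T;S_{2})$ pieces, splitting the resulting Cauchy product at cutoffs permitted by the strong lacunarity, and controlling the pieces through operator-valued Cauchy-Schwarz inequalities that yield the column/row $S_{1}$-norms of \eqref{eq:ColumnNorm}--\eqref{eq:RowNorm}.

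For case \eqref{it:Paley}, since $\hat f$ vanishes on negative integers, $f$ belongs to the noncommutative Hardy space $H^{1}(\T;S_{1})$. Invoke Pisier's noncommutative Szeg\H{o} factorization to write $f = gh$ with analytic $g, h \in H^{2}(\T;S_{2})$ satisfying $\|g\|_{L^{2}(\T;S_{2})}\|h\|_{L^{2}(\T;S_{2})} \le C\|f\|_{L^{1}(\T;S_{1})}$. Analyticity gives the Cauchy product
\[
\hat f(k_{j}) \;=\; \sum_{m=0}^{k_{j}} \hat g(m)\,\hat h(k_{j}-m).
\]
By $k_{j+1} > 2k_{j}$, pick cutoffs $t_{j}$ (say $t_{j} = \lfloor k_{j}/2\rfloor$) such that both the $\hat h$-frequency intervals $[k_{j}-t_{j}, k_{j}]$ and the $\hat g$-frequency intervals $(t_{j}, k_{j}]$ are pairwise disjoint across $j$. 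Split $\hat f(k_{j}) = a_{j} + b_{j}$ at $m = t_{j}$, with $a_{j}$ carrying the terms $m \le t_{j}$.

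To bound $\|(a_{j})\|_{\mathcal{C}_{E}}$, express the column operator $\xi \mapsto (a_{j}\xi)_{j}$ as a matrix product $\mathbf R \mathbf C$, with $\mathbf R$ block-diagonal (whose $j$-th block is the row $(\hat g(m))_{0\le m\le t_{j}}$) and $\mathbf C$ the column of columns whose $j$-th block is $(\hat h(k_{j}-m))_{0\le m\le t_{j}}$. Disjointness of the $\hat h$-intervals yields the operator inequality $\mathbf C^{*}\mathbf C \le \sum_{n}\hat h(n)^{*}\hat h(n) = \frac{1}{2\pi}\int h^{*}h$, and Parseval controls $\|\mathbf R\|_{\mathrm{op}}^{2} \le \|\frac{1}{2\pi}\int gg^{*}\|_{\mathrm{op}} \le \|g\|_{L^{2}(\T;S_{2})}^{2}$. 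An operator-valued Cauchy-Schwarz, adapted to the column operator-space structure on $S_{2}$, should then deliver $\|(a_{j})\|_{\mathcal{C}_{E}} \le C\|g\|_{L^{2}(\T;S_{2})}\|h\|_{L^{2}(\T;S_{2})}$. By the symmetric row/column argument on $b_{j}$ together with the disjointness of $(t_{j}, k_{j}]$, the analogous bound holds for $\|(b_{j}^{*})\|_{\mathcal{C}_{E}}$. Combining via \eqref{eq:SplittingNorm} yields \eqref{eq:Paley} in case \eqref{it:Paley}.

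For case \eqref{it:Complementary}, $f$ need not be analytic, so Szeg\H{o} is unavailable; instead use the \emph{generic} factorization from the pointwise polar decomposition: $g(t) = u(t)|f(t)|^{1/2}$, $h(t) = |f(t)|^{1/2}$, so that $g, h \in L^{2}(\T;S_{2})$ and $\|g\|_{L^{2}(\T;S_{2})}\|h\|_{L^{2}(\T;S_{2})} = \|f\|_{L^{1}(\T;S_{1})}$. The Cauchy product $\hat f(k_{j}) = \sum_{m}\hat g(m)\hat h(k_{j}-m)$ is now bi-infinite, but the spectral hypothesis $\hat f(n) = 0$ for $n > 0$, $n \notin K$ must be exploited---presumably via a further decomposition of $g$ or $h$ into analytic and anti-analytic parts, or by cancellation forced by the hypothesis---to reduce to the same disjointness-based splitting as in case \eqref{it:Paley}.

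The main obstacle is the sharp $S_{1}$-column estimate on $(a_{j})$. A naive H\"older argument produces only $\|(a_{j})\|_{\mathcal{C}_{E}} \le \|\mathbf R\|_{\mathrm{op}}\tr\sqrt{\mathbf C^{*}\mathbf C}$, and in general $\tr\sqrt{\frac{1}{2\pi}\int h^{*}h}$ is \emph{not} dominated by $\|h\|_{L^{2}(\T;S_{2})}$ (consider a constant $h$ that lies in $S_{2}\setminus S_{1}$). The ``generic factorization'' method must refine this, presumably by pairing $g$ and $h$ in matching column/row operator-space versions of $L^{2}(\T;S_{2})$ so that the noncommutative Cauchy-Schwarz is tight. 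Getting that alignment right---and extending the argument to case \eqref{it:Complementary} without the analytic structure---is where the technical work concentrates.
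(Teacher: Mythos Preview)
Your proposal correctly identifies its own fatal gap and does not close it. The obstacle you flag---that $\tr\sqrt{\sum_n \hat h(n)^*\hat h(n)}$ is not controlled by $\|h\|_{L^2(\T;S_2)}$---is real, and the Fourier-coefficient block-matrix scheme $\mathbf R\mathbf C$ cannot be rescued by any vague ``operator-space alignment'' of the factors. The paper's resolution is to abandon the Fourier-side viewpoint entirely. One factors $f=h^*g$ generically (no analyticity, even in Case~\ref{it:Paley}; your Szeg\H{o} route there is precisely the Lust-Piquard--Pisier argument the paper is explicitly replacing) and splits \emph{the function $g$}, not the Cauchy sum. The key device is a nested family of closed subspaces $M_j\subset L^2(\T;S_2)$ generated by products $z^n h b$ with $b\in B(\mathcal H)$ and $n$ in a lacunarity-determined range; the module property $M_jB(\mathcal H)\subset M_j$ forces the orthogonal projections $Q_j$ onto $A_jM_j$ (where $A_j$ is multiplication by $z^{k_j}$) to satisfy $\langle Q_jF,G\rangle_p=\langle F,Q_jG\rangle_p$ for the \emph{operator-valued} partial inner product. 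One then writes $\hat f(k_j)=\langle Q_{j+1}g,A_jh\rangle_p=a_j+b_j$ with $b_j=\langle g_j,A_jh\rangle_p$, where the pieces $g_j=(Q_{j+1}-Q_j)g$ are mutually orthogonal in $L^2(\T;S_2)$.

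The column estimate now succeeds because $h$ is never expanded in Fourier modes: convexity of $\|\cdot\|_{\mathcal C_E}$ gives
\[
\|(b_j^*)\|_{\mathcal C_E}\le\int_\T\tr\sqrt{h(t)^*\Bigl(\textstyle\sum_j g_j(t)g_j(t)^*\Bigr)h(t)}\,dt,
\]
pointwise Schatten--H\"older bounds the integrand by $\|h(t)\|_{S_2}\sqrt{\bigl\|\sum_j g_j(t)g_j(t)^*\bigr\|_{S_1}}$, and Cauchy--Schwarz in $t$ together with $\sum_j\|g_j\|_{L^2(S_2)}^2\le\|g\|_{L^2(S_2)}^2$ finishes. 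A symmetric argument with projections $P_j$ onto $M_j$ applied to $h$ handles $(a_j)$; the gap hypothesis in Case~\ref{it:Complementary} (respectively the negative-frequency hypothesis in Case~\ref{it:Paley}, with different subspaces $L_j$) is used exactly once, to make $g\perp_p A_{j+1}M_j$ so that $a_j$ can be rewritten as $\langle g,A_j(P_j-P_{j-1})h\rangle_p$. None of this is visible from your matrix-product setup, because the orthogonal decomposition $g=\sum_j g_j$ is taken relative to $h$-dependent, non-Fourier subspaces.
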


\begin{corollary}
\label{th:Steinhaus}
The same estimate
holds when~$\hat f$ vanishes off~$K$.
\end{corollary}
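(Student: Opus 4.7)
The plan is to deduce the corollary as a direct specialization of Theorem~\ref{th:Paley}, with no new analytic work. By definition $K$ is the range of a sequence $(k_j)_{j=0}^\infty$ of nonnegative integers, so $K \subset \{0, 1, 2, \dots\}$. Suppose $\hat f$ vanishes off $K$. Then in particular every negative integer lies in the complement of $K$, so $\hat f$ vanishes on the set of negative integers. This is exactly hypothesis~\eqref{it:Paley} of Theorem~\ref{th:Paley}, whence the estimate~\eqref{eq:Paley} follows immediately.

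Alternatively, one could invoke hypothesis~\eqref{it:Complementary} instead: the assumption that $\hat f$ vanishes off $K$ trivially implies that $\hat f$ vanishes at all positive integers in the complement of $K$. Either route yields the same conclusion with the same constant $C$, so the two cases of Theorem~\ref{th:Paley} both subsume the corollary's hypothesis.

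Because both of the theorem's independent sufficient conditions are implied by the corollary's stronger assumption that $\hat f$ is supported in $K$, there is no genuine obstacle to address at this stage. The substantive content — the Paley-type estimate for Fourier coefficients on the strongly lacunary set $K$ — is already packaged in Theorem~\ref{th:Paley}, and the corollary merely isolates the classical case in which $f$ is itself a lacunary series with spectrum contained in $K$.
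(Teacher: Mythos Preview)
Your deduction is correct and matches the paper's treatment: the corollary is stated immediately after Theorem~\ref{th:Paley} with no separate proof, being an obvious specialization since the hypothesis that $\hat f$ vanishes off~$K$ implies both Case~\ref{it:Paley} and Case~\ref{it:Complementary}. There is nothing to add.
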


The corollary is a counterpart for trigonometric series of Theorem~\ref{th:Khintchine}.
In~\cite{LP}, Lust and Pisier proved Case~\ref{it:Paley} 
of Theorem~\ref{th:Paley}
first, and 
deduced Theorem~\ref{th:Khintchine} from the corollary.
Case~\ref{it:Paley} is an extension
of Paley's theorem about lacunary
coefficients of scalar-valued~$H^1$~functions~\cite{REACH1}. The proof in~\cite{LP}, like Paley's, used a suitable factorization
of~$H^1$~functions as products of~$H^2$~functions.
That analytic factorization does not apply as readily in
Case~\ref{it:Complementary}, and it is not available in Theorem~\ref{th:Khintchine}.
Instead, we give direct proofs of Theorem~\ref{th:Khintchine} and both parts of Theorem~\ref{th:Paley}
using the generic factorization method introduced in~\cite[Section~2]{FouPac}
and modified in \cite[Section~2]{FouMiss}.
See Remark~\ref{rm:Factorization} below for further comparison of methods.

I thank Christian Le Merdy and Fedor Sukochev for pointing our an error in a earlier version of this paper. For more details, see Remark~\ref{rm:Equivalence}.

\begin{remark}
The methods used here work when~$C=2$.
Dual methods in~\cite{HaaMus} show that Corollary~\ref{th:Steinhaus} and Theorem~\ref{th:Khintchine} hold with~$C$ equal to
~$\sqrt 2$ and~$\sqrt 3$ respectively,  and that~$\sqrt 2$ is the best constant in the
corollary.
For scalar valued functions, dual methods in~\cite{FouArk} and~\cite{Clu} yield
the two cases of
Theorem~\ref{th:Paley} with constants~$\sqrt2$ and~$\sqrt e$ respectively; again, the former is best possible
in that context.
It is not known to what extent 
those
dual methods
extend to operator-valued functions.
As indicated in Remark~\ref{rm:Refine}
below,
further analysis of the methods used here supports the possibility of such extensions.
\end{remark}


\begin{remark}
Case~\ref{it:Complementary} of Theorem~\ref{th:Paley}
for scalar-valued functions
was rediscovered several times.
That instance
follows
easily
from another theorem
of Paley~\cite{REACM}, but this
may not have been noticed
until~\cite[Theorem~10]{FouPac}
and~\cite{Goes}.
Meanwhile, an equivalent dual construction 
had been found~\cite{Clu} 
by Clunie, and
Meyer~\cite[pp.~532-533]{Mey} had 
had
used other methods to prove
pointwise 
estimates that imply inequality~\eqref{eq:Paley}
in that instance.
Related proofs of 
Case~\ref{it:Complementary} 
for 
such
functions
appeared in~\cite{Vin},~\cite[Theorem~11]{FouPac}
and~\cite{Leb}.
The hypotheses about~$K$ and~$f$ were significantly weakened 
in~\cite{VinK}.
\end{remark}

\section{Properties of
these operator spaces
}
\label{sec:Properties}

See~\cite{Sim} for much more
about the spaces~$S_p$, which are often denoted by~$c_p$ or~$C_p$.
The members of~$S_1$ are
the compact operators, on 
the
fixed Hilbert space~$\Hilbert$, whose sequence of singular values belongs to~$\ell^1$.
Define~$\|A\|_{S_1}$ to be the sum of that sequence, that is the sum of the eigenvalues of~$|A| := \sqrt{A^*A}$.
Also denote that sum is by~$\tr(|A|)$, thereby defining 
the functional~$\tr$
on the set of positive operators in~$S_1$.
It
extends to become a linear map from~$S_1$
into the complex numbers.

For each positive real number~$p$, the space~$S_p$ consists of all operators~$A$ for which~$|A|^p \in S_1$; then
\[
\|A\|_{S_p} :=
\left\{\tr\left[\left(\sqrt{A^*A}\right)^p\right]\right\}^{1/p}
= \left\{\left\|\left(\sqrt{A^*A}\right)^p\right\|_{S_1}\right\}^{1/p}.
\]
There is a counterpart of H\"older's inequality, stating that
\begin{equation}
\label{eq:Hoelder}
\|AB\|_{S_r} \le \|A\|_{S_p}\|B\|_{S_q}
\quad\text{when}\quad
\frac{1}{r} = \frac{1}{p} + \frac{1}{q}.
\end{equation}
Conversely, for such indices,
operators the unit ball of~$S_r$ factor as
products of operators in the unit balls of~$S_p$ and~$S_q$.
There also are such factorizations in the form~$B^*A$, since~$\|B^*\|_{S_q} = \|B\|_{S_q}$.
%

The sets~$S_p$ are Banach spaces when~$1 \le p < \infty$. Moreover,~$S_2$ is a Hilbert space, with inner product~$
\langle A, B\rangle := \tr(B^*A)$.
It follows that~$L^2(\T; S_2)$ is also a Hilbert space with
inner product
\[
\langle f, g\rangle :=
\frac{1}{2\pi}\int_{-\pi}^\pi
\tr[g(t)^*f(t)]\,dt.
\]
This is 
also 
equal to
the trace of
\begin{equation}
\label{eq:PartialProduct}
\langle f, g\rangle_p := \frac{1}{2\pi}\int_{-\pi}^\pi
g(t)^*f(t)\,dt.
\end{equation}
Call 
the
operator-valued 
expression 
above
a \emph{partial inner product.}

Finally~$L^1(\T; S_1)$ is a Banach space, and
a function belongs to its unit ball 
if and only that function is the product of two functions in the unit ball of~$L^2(\T; S_2)$.
Call that a \emph{generic} factorization.
The  proof of Case~\ref{it:Paley} in~\cite{LP} also
uses such factors,
but adds the requirement~\cite{Sar, Muh, HaaPi}
that their Fourier coefficients vanish at all negative integers. This is called \emph{analytic factorization.}

\begin{remark}
As noted in~\cite[p. 250]{LP}, a weaker form of analytic factorization suffices for that proof of Case~\ref{it:Paley}. In Remark~\ref{rm:WeakFactor}, we explain how our method works with a weaker form of generic factorization.
\end{remark}


\section{Two orthogonality steps}\label{sec:two-step}

Recall that the Rademacher function~$r_0$ has period~$1$ on the real line~$\R$, and takes the values~$1$ and~$-1$ in the intervals~$[0, 1/2)$ and~$[1/2, 1)$ respectively; then~$r_j(t) := r_0(2^j t)$ for all~$t$ in~$\R$
and each positive integer~$j$.
The Walsh functions~$(w_n)_{n=0}^\infty$ are
the
products of 
finitely-many
Rademacher functions, including the empty product~$1$. We use the Paley enumeration of this 
system, where~$w_n$ is the product of the 
distinct 
functions~$r_{j_m}$ for which~$n = \sum_m 2^{j_m}$;
in particular,~$w_0 = 1$ and~$w_{2^j} = r_j$. Our method transfers easily to other standard enumerations.

\begin{proof}[Proof of Theorem~\ref{th:Khintchine}]
Rescale to make~$\|f\|_{L^1([0, 1); S_1)} = 1.$ Then factor~$f$ as~$h^*g$ where~$g$ and~$h$ belong to the unit ball of the Hilbert space~$L^2([0, 1); S_2)$. Rewrite the 
Walsh coefficients of~$f$ in the form
\begin{equation}
\label{eq:WalshCoefficient}
\hat f(w_n)
= \int_0^1 w_n(t)h^*(t)g(t)\,dt;
\end{equation}
in this context, use the notation~$\langle g, w_n h\rangle_p$ for the integral above.
The
hypothesis in the theorem is
that this partial inner product vanishes
when~$n$ is not a power of~$2$.

Let~$A_j$ be the unitary operator on~$
L^2([0, 1); S_2)$ that multiplies each function by~$w_{2^j}$. Matters reduce to splitting the sequence
\[
(\hat f(w_{2^j})) =
(\langle g, A_jh\rangle_p)
\]
as a sum of two sequences~$(a_j)$ and~$(b_j)$ for which
\begin{equation}
\label{eq:SplitNorms}
\|(a_j)\|_{\mathcal{C}_E} \le \frac{C}{2}
\quad\text{and}\quad
\|(b^*_j)\|_{\mathcal{C}_E} \le \frac{C}{2}.
\end{equation}
This will be done using
nested 
closed
subspaces
of~$L^2([0, 1);S_2)$
and
the
orthogonal projections onto them.

Denote the 
set
of functions orthogonal to 
a 
closed
subspace,~$M$ say, 
of~$L^2([0, 1);S_2)$
by~$M^\perp$. Since~$\langle u, v\rangle = 0$ when~$u \in M$ and~$v \in M^\perp$, the trace of~$\langle u, v\rangle_p$ vanishes in that case.

For some subspaces~$M$, the stronger condition that~$\langle u, v\rangle_p= 0$ holds when~$u \in M$ and~$v \in M^\perp$. We claim that this happens 
if~$ub \in M$ whenever~$u \in M$ 
and~$b$ belongs to the space~$B(\Hilbert)$ of bounded operators
on~$\Hilbert$.
Indeed, suppose that~$M$ has the latter property,  and observe
that
\begin{equation}
\label{eq:Module}
\langle ub, v\rangle_p =
(\langle u, v\rangle_p)b
\end{equation}
for all functions~$u$ and~$v$ in~$L^2([0, 1);S_2)$
and all bounded operators~$b$.
If~$u \in M$ and~$v \in M^\perp$, then~$v \perp ub$ 
since~$MB(\Hilbert) \subset M$;
that is,
\[
0 = \langle ub, v\rangle = \tr \langle ub, v\rangle_p.
\]
By equation~\eqref{eq:Module},~$\tr[( \langle u, v\rangle_p)b]$
then vanishes for all~$b$ in~$B(\Hilbert)$.
Therefore~$\langle u, v\rangle_p = 0$,
as claimed.

In that situation, denote the orthogonal projection with range equal to the closed subspace~$M$ by~$Q$, and denote the orthogonal projection with range~$M^\perp$ by~$Q^\perp$. Split any two members~$F$ and~$G$ of~$L^2([0, 1);S_2)$ as~$QF + Q^\perp F$ and~$QG + Q^\perp G$.
Since~$\langle Q^\perp F, QG\rangle_p$ and~$\langle QF, Q^\perp G\rangle_p$ both vanish, 
\begin{equation}
\label{eq:Projections}
\langle F, QG\rangle_p
= \langle QF, QG\rangle_p
= \langle QF, G\rangle_p.
\end{equation}

Let~$M_j$ be the smallest closed subspace of~$L^2([0, 1); S_2)$
that contains all
the products~$w_n hb$ in which~$0 < n < 2^{j+1}$ and~$b \in B(\Hilbert)$;
those factors~$w_n$ are
the
nonempty products of 
distinct
Rademacher functions~$r_{j'}$ with~$j' \le j$. Clearly,
\begin{equation}
\label{eq:M'sNest}
M_0 \subset M_1 \subset \cdots \subset M_J \subset \cdots.
\end{equation}
Then~$A_jM_j$ is 
the smallest closed subspace containing all
the products~$w_n hb$ 
with~$n \ne 2^j$
and~$0 \le n < 2^{j+1}$,
and with~$b \in B(\Hilbert)$;
these factors~$w_n$ are the products of 
distinct
Rademacher functions~$r_{j'}$ with~$j' \le j$
except for the singleton product that gives~$r_{j}$.
Again,
\begin{equation}
\label{eq:AM'sNest}
A_0M_0 \subset A_1M_1 \subset \cdots \subset A_jM_j \subset \cdots.
\end{equation}

Moreover,~$A_jh \in A_{j+1}M_{j+1}$. 
Finally,~$A_{j+1}M_j$ is 
the smallest closed subspace containing all
the products~$w_n hb$ with~$2^{j+1} < n < 2^{j+2}$ and~$b$ in~$B(\Hilbert)$. 
It is assumed in Theorem~\ref{th:Khintchine}
that~$\hat f(w_n)$ for these indices~$n$,
that is~$\langle g, w_{n} h\rangle_p = 0$. 
Equation~\eqref{eq:Module} then 
makes~$\langle g, w_nhb\rangle_p = 0$
for all~$b$ in~$B(\Hilbert)$.
Hence~$\langle g, v\rangle_p = 0$
for all functions~$v$ in~$A_{j+1}M_j$. 
Write this as~$g\perp_p A_{j+1}M_j$.

Denote the orthogonal projection onto~$A_jM_j$ by~$Q_j$.
The subspaces~$M_j$ were chosen so that~$M_jB(\Hilbert) \subset M_j$,
and their images~$A_jM_j$ also have this property.
Use the fact that~$A_j h \in A_{j+1}M_{j+1}$, and 
apply equation~\eqref{eq:Projections} with~$M = A_{j+1}M_{j+1}$ to write
\begin{gather}
\label{eq:SelfAdjoint}
\langle g, A_jh\rangle_p
= \langle g, Q_{j+1}A_jh\rangle_p
= \langle Q_{j+1}g, A_jh\rangle_p\\
\label{eq:Splitting}
=\quad
\langle Q_jg, A_jh\rangle_p
+
\langle(Q_{j+1}-Q_j)g, A_jh\rangle_p
= a_j + b_j
\quad\textnormal{say.}
\end{gather}

Let~$g_j = (Q_{j+1} - Q_j)g$.
Since the orthogonal projections~$Q_j$ nest,
\[
\sum_j
\|g_j\|_{L^2([0, 1); S_2)}^2
\le
\|g\|_{L^2([0, 1); S_2)}^2
= 1.
\]
Now argue as in~\cite[p.~250]{LP}.
Write~$b_j^*$ as~$\int_0^1 w_{2^j}(t)g_j^*(t)h(t)\,dt$, and regard this as the average of the operators~$w_{2^j}(t)g_j^*(t)h(t)$.
Since~$\|\cdot\|_{\mathcal{C}_E}$ is a norm,
\[
\|(b_j^*)\|_{\mathcal{C}_E}
\le \int_0^1 \left\|\left(w_{2^j}(t)g_j^*(t)h(t)\right)\right\|_{\mathcal{C}_E}\,dt
= \int_0^1\left \|\left(g_j^*(t)h(t)\right)\right\|_{\mathcal{C}_E}\,dt.
\]
Fix~$t$, and expand the last inner norm above as
\[
\left\|
\sqrt{
h(t)^*\left[\sum_j g_j(t)g_j^*(t)\right]h(t)
}
\,\,\right\|_{S_1}.
\]
Let~$G(t)$ be the
operator~$\sum_j g_j(t)g_j^*(t)$.
The quantity above is equal to the square root of~$\|h(t)^*G(t)h(t)\|_{S_{1/2}}$.
%
By the H\"older inequality~\eqref{eq:Hoelder},
\[
\|h(t)^*G(t)h(t)\|_{S_{1/2}}
\le
\|h(t)^*\|_{S_2}
\|G(t)h(t)\|_{S_{2/3}},
\]
\[
\text{and}\quad
\|G(t)h(t)\|_{S_{2/3}}
\le
\|G(t)\|_{S_1}\|\|h(t)\|_{S_2}.
\]
Hence
\[
\|(b_j^*)\|_{\mathcal{C}_E}
\le \int_0^1
\sqrt{
\|h(t)^*\|_{S_2}\|G(t)\|_{S_1}\|\|h(t)\|_{S_2}
}
\,dt
\]
\[
= \int_0^1
\|h(t)\|_{S_2}\sqrt{\|G(t)\|_{S_1}}
\,dt.
\]
Cauchy-Schwarz then gives the upper bound
\[
\|h\|_{L^2([0,1);S_2)}
\sqrt{\|G\|_{L^1([0, 1);S_1)}}\quad.
\]
The first factor is equal to~$1$.
The square of the second factor is 
\[
\int_0^1
\tr
\sum_j g_j(t)g_j^*(t)
\,dt
= \sum_j
\|g_j^*\|_{L^2([0, 1); S_2)}^2
= \sum_j
\|g_j\|_{L^2([0, 1); S_2)}^2
\le 1.
\]
This gives the second inequality in line~\eqref{eq:SplitNorms}
with~$C=2$.

Apply equation~\eqref{eq:Projections} with~$M = A_jM_j$ to
rewrite~$a_j$ as~$\langle g, Q_jA_jh\rangle_p$. Let~$P_j$ be the orthogonal projection with range~$M_j$.
Since~$A_j$ is unitary,
\[
A_jP_j =
Q_jA_j 
\quad\textnormal{and}\quad
a_j = \langle g, A_jP_jh\rangle_p.
\]
Recall that~$g \perp_p A_{j+1}M_j$ for all~$j$. So~$\langle g, A_jP_{j-1}h\rangle_p = 0$ for all~$j \ge 1$. Make this true when~$j=0$ by letting~$P_{-1} = 0$. Then
\[
a_j = \langle g, A_j(P_j - P_{j-1})h\rangle_p
\quad\text{for all~$j$.}
\]
Use
the method applied to~$(b_j^*)$ to confirm that~$\|(a_j)\|_{\mathcal{C}_E} \le 1$.
\end{proof}

\begin{proof}[Proof of Case~\ref{it:Complementary} of Theorem~\ref{th:Paley}]
Transfer the argument above as follows.
Replace~$L^2([0, 1); S_2)$ with~$L^2(\T; S_2)$, and use the partial inner product given in formula~\eqref{eq:PartialProduct}.
Factor a function~$f$ in the unit ball of~$L^1(\T;S_1)$ as~$h^*g$ where~$g$ and~$h$ belong to the unit ball of~$L^2(\T;S_2)$.
Let~$z$ be the function
mapping each number~$t$
in the interval~$(-\pi, \pi]$ to~$e^{it}$. Then
\begin{equation}
\label{eq:coefficients}
\hat f(n)= \langle g, z^nh\rangle_p
\quad\text{for all~$n$.}
\end{equation}
Let~$A_j$ be the unitary operator on~$L^2(\T; S_2)$ that multiplies each function by~$z^{k_j}$.
Then~$\hat f(k_j) = \langle g, A_jh\rangle$.

Let~$M_j$ be the 
closure of the
subspace of~$L^2(\T; S_2)$ spanned by the products~$z^n hb$ 
for which~$-k_j \le n < 0$ and~$b \in B(\Hilbert)$. 
Then the
inclusions~\eqref{eq:M'sNest} hold for these subspaces, as do the inclusions~\eqref{eq:AM'sNest} for their images~$A_jM_j$. 
Again,
\[
A_jh \in A_{j+1}M_{j+1}
\quad\textnormal{and}\quad
(A_jM_j)B(\Hilbert) \subset A_jM_j.
\]
Now~$A_{j+1}M_j$ is 
the closure of the subspace
spanned by the
products~$z^nhb$ in which~$b \in B(\Hilbert)$
and~$n \in [k_{j+1} - k_j, k_{j+1})$.
By strong lacunarity,
that interval 
is included in $(k_j, k_{j+1})$.
The gap hypothesis on~$\hat f(n)$ 
and formulas~\eqref{eq:coefficients} and~\eqref{eq:Module}
then 
imply
that~$g\perp_p A_{j+1}M_j$.

Define the projections~$Q_j$ and~$P_j$ as before, and split~$\langle g, A_jh\rangle_p$ in the same way. Estimate~$\|(b^*_j)\|_{\mathcal{C}_E}$ and~$\|(a_j)\|_{\mathcal{C}_E}$ as above.
\end{proof}

\begin{proof}[Proof of Case~\ref{it:Paley} of Theorem~\ref{th:Paley}]
Use the same factorization and the same operators~$A_j$ as in Case~\ref{it:Complementary},
but replace the subspaces~$M_j$
with the closures,~$L_j$ say,
in~$L^2(\T; S_2)$ of the span of the products~$z^n hb$
for which~$n < -k_j$ and~$b \in B(\Hilbert)$.
These subspaces nest in the opposite way,
that is
\[
L_0 \supset L_1  \supset \cdots \supset L_J \supset\cdots.
\]
In every case, the subspace~$A_jL_j$ is the same, namely the closure of the span of
the products~$z^n hb$
with~$n < 0$. Formula~\eqref{eq:coefficients} and the hypothesis that~$\hat f(n)$ for all~$n < 0$ now makes~$g \perp_p A_jL_j$ for all~$j$.
The
lacunarity hypothesis implies that~$A_jh \in A_{j+1}L_j$ for all~$j$, and that
\[
A_1L_0 \subset A_2L_1 \subset \cdots \subset A_{j+1}L_j \subset \cdots.
\]
Finally,~$(A_{j+1}L_j)B(\Hilbert) \subset A_{j+1}L_j$ in all cases.

Now
denote the orthogonal projection onto~$A_{j+1}L_j$ by~$Q_{j+1}$, and let~$Q_0 = 0$.
Then it is again true that
\begin{gather}
\langle g, A_jh\rangle_p
= \langle g, Q_{j+1}A_jh\rangle_p
= \langle Q_{j+1}g, A_jh\rangle_p
\notag\\
=
\langle Q_jg, A_jh\rangle_p
+
\langle(Q_{j+1}-Q_j)g, A_jh\rangle_p
= a_j + b_j
\quad\textnormal{say.}
\notag
\end{gather}
Estimate~$\|(b_j^*)\|_{\mathcal{C}_E}
$ as before.

Note that~$a_0 = 0$
since~$Q_0 = 0$.
When~$j > 0$, rewrite~$a_j$  as~$\langle g, Q_jA_jh\rangle_p$.
For those indices~$j$, let~$P_{j-1}$ be the orthogonal projection with range~$L_{j-1}$.
This time,
\[
Q_jA_j = A_jP_{j-1}
\quad\textnormal{and}\quad
a_j = \langle g, A_jP_{j-1}h\rangle_p.
\]
Now~$\langle g, A_jP_{j}h\rangle_p = 0$,
since~$g \perp_p A_{j}L_j$ for all~$j$. 
Write
\[
a_j = \langle g, A_j(P_{j-1} - P_{j})h\rangle_p.
\]
The
desired estimate~$\|(a_j)\|_{\mathcal{C}_E} \le 1$ 
follows as before.
\end{proof}

\begin{remark}
\label{rm:WeakFactor}
A weaker form of generic factorization suffices.
In Theorem~\ref{th:Khintchine}, for instance, it is enough to prove that
\[
\ltrpv(\hat f(w_j))
\rvert\mspace{-1.2mu}\rvert\mspace{-1.2mu}\rvert
\le 2
\quad\textnormal{when~$\|f\|_{L^1(D;S_1)} < 1$.}
\]
By the definition of Bochner integration,~$f$ can then be represented as the sum of a series
\[
\sum_{m=1}^\infty f_m
\]
where the terms~$f_m$ are simple functions,
and~$\sum_{m=1}^\infty \|f_m\|_{L^1(D; S_1)} \le 1$.
Generic factorization with simple factors is easy to check for
simple functions.
Use this to write~$f_m = h_m^*g_m$, 
where~$\sum_{m=1}^\infty \|g_m\|_{L^2(D; S_2)}^2 \le 1$
and the same is true for the factors~$h_m$. 

The sequences~$g = (g_m)$ and~$h = (h_m)$ are members of the unit ball of the Hilbert
space~$\ell^2(L^2(D; S_2))$;
their inner product is equal to~$\tr(\langle g, h\rangle_p)$ for the partial inner product given by
\[
\langle g, h\rangle_p
=
\sum_{m=1}^\infty \int_{D} h_m^*g_m.
\]
The methods of this section work in this setting with~$L^2(D; S_2)$ replaced
by~$\ell^2(L^2(D; S_2))$,
and~$A_j$
redefined to act on~$(h_m)$ by termwise multiplication.
\end{remark}

\begin{remark}
\label{rm:Factorization}
The subspaces~$L_j$ used in Case~\ref{it:Paley} are invariant under multiplication by~$\overline z$, and their adjoints are invariant under multiplication by~$z$. In the discussion of scalar-valued functions in \cite{Forelli}, 
it is observed that the latter subspaces must be simply invariant
if~$f \in H^1(\T)$, and that
one can apply the characterization of simply invariant subspaces of~$L^2(\T)$ to show that both factors~$h^*$ and~$g$  
can be chosen to belong to~$H^2(\T)$.
One proof~\cite{Muh}
of analytic factorization
in~$H^1(\T; S_1)$ uses those ideas and more.
\end{remark}

\begin{remark}
\label{rm:Refine}
The
generic factorization methods work equally well with the hypotheses in Theorem~\ref{th:Paley} weakened to only require that~$\hat f$ vanish on suitable smaller sets of integers. 
The same smaller sets
arise in analyses of the dual methods for scalar-valued functions. See~\cite[Sections~4 and~5]{FouMiss} for more details.
\end{remark}

\begin{remark}
\label{rm:Equivalence}
In this version of the paper, we worked with 
closed
subspaces~$M$
for which the inclusion~$MB(\Hilbert) \subset M$ holds. 
Of course, this inclusion is really an equality, 
since the identity operator belongs to~$B(\Hilbert)$. 
We showed in Section~\ref{sec:two-step} that
the inclusion 
implies 
for
the orthogonal projection~$Q$ onto~$M$ 
that
\begin{equation}
\label{eq:AdjointForPartial}
\langle F, QG\rangle_p = \langle QF, G\rangle_p
\quad\textnormal{for all~$F$ and~$G$.}
\end{equation}
%
The converse is also true.
Indeed,
using equation~\eqref{eq:AdjointForPartial} with~$F$ in~$M$
and~$G\perp M$ yields that~$\langle F, G\rangle_p = 0$ in that case.
It then follows from equation~\eqref{eq:Module}
that~$Fb \perp G$ for all~$b$ in~$B(\Hilbert)$ 
and all~$G$ in~$M^\perp$, 
that is~$Fb \in\left (M^\perp\right)^\perp = M$.

The 
error
in the first version of this paper was the use of property~\eqref{eq:AdjointForPartial}, in six places like equation~\eqref{eq:SelfAdjoint}
in this version, for subspaces that 
do not have that property.
\end{remark}

\bibliographystyle{amsplain}

\begin{thebibliography}{10}

\bibitem{Clu}
J.M. Clunie,
\textit{On the derivative of a bounded function,}
Proc. London Math. Soc.
\textbf{14A} (1965), 58--68.

\bibitem{Forelli}
Frank Forelli,
\textit{Invariant subspaces in~$L^1$,}
Proc. Amer. Math. Soc. 
\textbf{14} (1963), 76--79.

\bibitem{FouPac}
John J.F. Fournier,
\textit{Extensions of a Fourier multiplier theorem
of Paley,}
Pacific J. Math.
\textbf{30}, (1969), 415--431.

\bibitem{FouArk}
\bysame,
\textit{On a theorem of Paley and the Littlewood conjecture,}
Ark. Mat. \textbf{17}, (1979), 199--216.

\bibitem{FouMiss}
\bysame,
\textit{The missing proof of Paley's theorem about lacunary coefficients,}
arXiv:1407.1458 [math.CA].

\bibitem{Goes}
Gunther Goes,
\textit{On a Tauberian theorem for sequences with gaps and on Fourier series with gaps,}
T\^{o}hoku Math. J. \textbf{24} (1972), 153--165.

\bibitem{HaaMus}
Uffe Haagerup and Magdalena Musat, \textit{On the best constants in noncommutative Khintchine-type inequalities,}
J. Funct. Anal. \textbf{250} (2007), 588--624.

\bibitem{HaaPi}
Uffe Haagerup and Gilles Pisier,
\textit{Factorization of analytic functions with values in noncommutative~$L^1$-spaces and applications,}
Canad. J. Math. \textbf{41} (1989), 882--906.


\bibitem{Leb}
A. Lebow,
\textit{A power-bounded operator
that is not polynomially bounded,}
Michigan Math. J. \textbf{15}  (1968), 397--399.

\bibitem{LP}
Fran\c{c}oise Lust-Piquard and Gilles Pisier
\textit{Noncommutative Khintchine and Paley inequalities,}
Ark. Mat. \textbf{29} (1991), 241--260.

\bibitem{Mey}
Yves Meyer,
\textit{Endomorphismes des id\'eaux ferm\'es de $L^{1}(G)$, classes de Hardy et s\'eries de Fourier lacunaires,} Ann. Sci. \'Ecole Norm. Sup. (4) \textbf{1} (1968), 499--580.

\bibitem{Muh} Paul S. Muhly, 
\textit{Fefferman spaces and~$C^*$-algebras,} 
Banach space theory (Iowa City, IA, 1987), 371--385, 
Contemp. Math., \textbf{85} Amer. Math. Soc., Providence, RI, 1989.

\bibitem{REACM}
R.E.A.C. Paley,
\textit{A note on power series,}
J. London Math. Soc. \textbf{7} (1932), 122--130.

\bibitem{REACH1}
\bysame,
\textit{On the lacunary coefficients of power series,}
Ann. of Math.~(2) \textbf{34} (1933), 615--616.



\bibitem{Sar}
Donald Sarason,
\textit{Generalized interpolation in~$H^\infty$,}
Trans. Amer. Math. Soc., \textbf{127} (1967), 179--203.

\bibitem{Sim} Barry Simon, \textit{Trace ideals and their applications,} Second edition. Mathematical Surveys and Monographs \textbf{120}, American Mathematical Society, Providence, RI, 2005. viii+150 pp.

\bibitem{Vin}
S.A. Vinogradov,
\textit{The Banach-Rudin-Carleson
interpolation theorem
and the norms of embedding operators
for some classes of analytical functions,}
(Russian);
English transl. in Journ.
of Soviet Math.
\textbf{19} (1972), 1--28.

\bibitem{VinK}
\bysame,
\textit{A strengthening of Kolmogorov's theorem on the conjugate function and interpolational properties of uniformly converging power series,} (Russian)
Spectral theory of functions and operators, II.
Trudy Mat. Inst. Steklov. \textbf{155} (1981), 7--40, 183.

\end{thebibliography}

\end{document}